\newcommand{\MSp}{MSp}
\newcommand{\MSO}{MSO}
\newcommand{\MU}{MU}
\subjclass[2010]{Primary: 55N22,  
 Secondary: 
55P43} 
\begin{document}
	\title{Nilpotence in the Symplectic Bordism Ring}
	\author{Justin Noel}
	\address{University of Regensburg\\
	NWF I - Mathematik; Regensburg, Germany}
	\email{justin.noel@mathematik.uni-regensburg.de}
	\urladdr{http://nullplug.org}
	\thanks{The author was partially supported by CRC 1085 - Higher Invariants, Regensburg.}

\date{\today}

\begin{abstract}
	We prove a folklore result which gives a sequence of necessary and sufficient conditions for a stably symplectic manifold to define a nilpotent element in the symplectic bordism ring. 
\end{abstract}

\maketitle

The purpose of this note is to prove the following folklore result:
\begin{thm}\label{thm:vanishing}\ 
	Let $M$ be a stably symplectic manifold. Then the following are equivalent:
	\begin{enumerate}
		\item\label{it:nil} For all sufficiently large $n$, $M^{\times n}$, the $n$-fold cartesian product of $M$ with itself bounds a stably symplectic manifold. 
		\item\label{it:tor} There exists a positive integer $n$, such that $\coprod_n M$, the $n$-fold disjoint union of $M$ with itself bounds a stably symplectic manifold. 
		\item\label{it:pontryagin} The Pontryagin numbers of $M$ vanish.
		\item\label{it:bu} $M$ bounds a stably complex-manifold.
		\item\label{it:bso} $M$ bounds an oriented manifold.
	\end{enumerate}
\end{thm}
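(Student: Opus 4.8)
The plan is to read the five conditions as statements about the single class $[M] \in \MSp_*$ and to prove the cycle $(\ref{it:nil}) \Rightarrow (\ref{it:bu}) \Rightarrow (\ref{it:bso}) \Rightarrow (\ref{it:pontryagin}) \Rightarrow (\ref{it:tor}) \Rightarrow (\ref{it:nil})$. Here $(\ref{it:nil})$ asserts that $[M]$ is nilpotent and $(\ref{it:tor})$ that $[M]$ is torsion, while $(\ref{it:bu})$ and $(\ref{it:bso})$ assert that $[M]$ maps to zero under the forgetful ring homomorphisms $\MSp_* \to \MU_*$ and $\MSp_* \to \MSO_*$ induced by $Sp(n) \subset U(2n) \subset SO(4n)$. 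The two facts driving the argument are that $\MU_*$ is a torsion-free integral domain (a polynomial ring over the integers) and that rational bordism classes are detected by characteristic numbers.

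The three implications $(\ref{it:nil}) \Rightarrow (\ref{it:bu}) \Rightarrow (\ref{it:bso}) \Rightarrow (\ref{it:pontryagin})$ are the soft ones. If $[M]$ is nilpotent then its image in the integral domain $\MU_*$ is a nilpotent, hence zero, so $M$ bounds a stably complex manifold; this is $(\ref{it:bu})$. Forgetting the complex structure of such a null-bordism turns it into an oriented null-bordism, giving $(\ref{it:bso})$. Finally, if $M$ is an oriented boundary then all of its Pontryagin numbers vanish, since Pontryagin numbers are $SO$-characteristic numbers and these vanish on boundaries; this is $(\ref{it:pontryagin})$.

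For $(\ref{it:pontryagin}) \Rightarrow (\ref{it:tor})$ I would pass through the symplectic Pontryagin numbers. The key point is that $H^*(BSO;\mathbf{Q}) \to H^*(BSp;\mathbf{Q})$ is surjective: with the standard normalizations the total ordinary Pontryagin class of the underlying real bundle equals the square of the total symplectic Pontryagin class, so each symplectic class $q_i$ is a rational polynomial in the ordinary classes $p_j$. Consequently every symplectic Pontryagin number of $M$ is a rational combination of ordinary Pontryagin numbers, so if the latter all vanish, so do the former. Since $\MSp_* \otimes \mathbf{Q}$ is detected by symplectic Pontryagin numbers, $[M]$ becomes zero in $\MSp_* \otimes \mathbf{Q}$, which is to say $[M]$ is torsion.

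The remaining implication $(\ref{it:tor}) \Rightarrow (\ref{it:nil})$ is the heart of the matter and the step I expect to require real input. I would deduce it from the Nilpotence Theorem of Devinatz--Hopkins--Smith applied to the ring spectrum $\MSp$: an element of $\pi_* \MSp$ is nilpotent as soon as its Hurewicz image in $\MU_*(\MSp)$ is nilpotent. It therefore suffices to know that $\MU_*(\MSp)$ is torsion-free, for then the torsion class $[M]$ has zero Hurewicz image. This torsion-freeness follows because every $Sp$-bundle is complex, so $\MSp$ is complex orientable and the Thom isomorphism gives $\MU_*(\MSp) \cong \MU_*(BSp)$; since $H_*(BSp)$ is torsion-free and concentrated in even degrees, the Atiyah--Hirzebruch spectral sequence collapses and exhibits $\MU_*(BSp)$ as a free, hence torsion-free, $\MU_*$-module. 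The main obstacle is thus not any single calculation but the correct deployment of the nilpotence theorem together with this structural input on $\MU_*(\MSp)$; the other four implications are formal once one has the torsion-freeness of $\MU_*$ and the rational detection results.
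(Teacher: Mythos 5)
Your argument is correct, but it is organized quite differently from the paper's. You prove the cycle $(\ref{it:nil})\Rightarrow(\ref{it:bu})\Rightarrow(\ref{it:bso})\Rightarrow(\ref{it:pontryagin})\Rightarrow(\ref{it:tor})\Rightarrow(\ref{it:nil})$, whereas the paper uses a hub: every condition is shown to be equivalent to $[M]$ lying in the kernel of the \emph{integral} Hurewicz map $h\colon\pi_*\MSp\to H_*(\MSp;\bZ)\cong\bZ[\overline{x}_{4i}]$. The substantive divergence is in how nilpotence enters. The paper gets ``$h[M]=0\Rightarrow[M]$ nilpotent'' from the $E_\infty$-structure on $\MSp$ together with the main theorem of \cite{MNN}, and only mentions the Devinatz--Hopkins--Smith route in \Cref{rem:nige}; you instead make \cite{DHS88} the engine of $(\ref{it:tor})\Rightarrow(\ref{it:nil})$, reducing to the torsion-freeness of $\MU_*\MSp$ via the collapsing Atiyah--Hirzebruch spectral sequence --- exactly the computation of \Cref{rem:nige}. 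Your route needs only a homotopy ring structure on $\MSp$ rather than the $E_\infty$-structure and the deeper \cite{MNN} input. Your other steps also trade the paper's tools for softer ones: $(\ref{it:nil})\Rightarrow(\ref{it:bu})$ uses only that $\MU_*$ is an integral domain, where the paper invokes injectivity of the integral Hurewicz map for $\MU$ (and, for $\MSO$, Milnor's theorem); your $(\ref{it:pontryagin})\Rightarrow(\ref{it:tor})$ via the rational surjection $H^*(BSO;\bQ)\to H^*(BSp;\bQ)$ and detection of $\MSp_*\otimes\bQ$ by symplectic Pontryagin numbers is the transpose of the paper's Proposition \ref{prop:char-classes}.\eqref{it:rat-bso} combined with its identification $h[M]=\sum n_\alpha\overline{x}^\alpha$. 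One small point worth making explicit: the theorem's ``Pontryagin numbers'' are the symplectic ones (characteristic numbers for $H^*(BSp;\bZ)=\bZ[P_{4i}]$), while your cycle at step $(\ref{it:bso})\Rightarrow(\ref{it:pontryagin})$ naturally produces vanishing of the \emph{ordinary} Pontryagin numbers; your observation that the two collections are rational linear combinations of one another closes this gap, but it should be stated as part of the argument rather than only inside $(\ref{it:pontryagin})\Rightarrow(\ref{it:tor})$.
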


The author would like to thank Nigel Ray for politely pointing out that the identification of the nilpotent symplectic bordism classes has been well-known to experts since 1986 using the nilpotence theorem of \cite{DHS88}. Indeed, this method of identifying the nilpotent classes is mentioned in the introductions of \cite{GoR92,BoK94} and we will recall it in \Cref{rem:nige}. The author makes no claims of originality for \Cref{thm:vanishing}.

We now make the claims in \Cref{thm:vanishing} precise. In this paper all manifolds will be smooth and compact.  A manifold $M$ is called stably symplectic (resp.~stably complex, oriented) if the map \[ M\rightarrow BO\] classifying the stable tangent bundle of $M$ has a designated lift to $BSp$ (resp.~$BU$, $BSO$) in the homotopy category. Here we are regarding $BSp$, the classifying space of the infinite dimensional symplectic group, to be a space over $BO$ via the standard forgetful map. This map is induced by the group homomorphism which takes a unitary quaternionic matrix to its underlying orthogonal matrix. A similar construction holds for $BU$ and the forgetful map $BSp\rightarrow BO$ factors through the forgetful map from $BU$. We note that this notion is weaker than the standard geometric notion of a symplectic structure; in particular, we have no restrictions on the dimension of $M$. We say that a manifold $M$ bounds a stably symplectic (resp.~stably complex, oriented) manifold if there is another stably symplectic (resp.~stably complex, oriented) manifold $W$ such that the boundary of $W$ is $M$ and the stable tangential structure on $M$ is the restriction of the stable tangential structure on $W$ to the boundary. 

The bordism classes of stably symplectic manifolds form a graded ring which can be identified, via the Pontryagin-Thom construction, with $\pi_* \MSp$, the homotopy groups of the Thom spectrum $\MSp$.  Using the machinery of \cite{ABGHR08} this Thom spectrum can be obtained by applying the Thom construction to the composite map \[BSp\xrightarrow{i} BO\xrightarrow{j} BGL_1 S.\] Here $i$ is the forgetful map above. Since the forgetful map respects direct sums and the direct sum operation induces the infinite loop space structures on $BSp$ and $BO$, one can show that $i$ is a map of infinite loop spaces. The space $BGL_1 S$ is the classifying space for stable spherical fibrations and the map $j$ is a delooping of the classical $j$-homomorphism which is known to be an infinite loop map.  Since both $i$ and $j$ are infinite loop maps, $\MSp$ has an induced $E_\infty$-ring structure \cite[\S IX.7]{LMS86}. The induced multiplicative structure on  $\pi_* \MSp$ agrees with the bordism ring structure. Under this isomorphism, addition  corresponds to the disjoint union of manifolds and multiplication corresponds to the cartesian product of manifolds. 

We should note that we only have partial, albeit extensive, information about $\pi_*\MSp$. After inverting 2, Novikov showed that this is a polynomial algebra on generators in each positive degree divisible by 4 \cite{Nov62}, but the two primary groups are only known through a range. Liulevicius, Nigel Ray, and then Kochman calculated these groups through degree 6, 19, and 100, respectively \cite{Liu64, Ray71, Koc93}. Here are the first 15 groups:
\vspace{5pt}
\begin{center}
\begin{tabular}{ | c | c | c | c | c | c | c | c | c | c | c | c | c | c | c |}
	\hline 
	0 & 1 & 2 & 3 & 4 & 5 & 6 & 7 & 8 & 9 & 10 & 11 & 12 & 13 & 14 \\
	\hline
	$\bZ$ & $\bZ/2$ & $\bZ/2$ & 0 & $\bZ$ & $\bZ/2$ & $\bZ/2$ & 0 & $\bZ^2$ & $\bZ/2$ & $\bZ/2^2$ & 0 & $\bZ^3$ & $\bZ/2^2$ & $\bZ/2^2$\\
	\hline
\end{tabular}
\end{center}
\vspace{5pt}
There are a lot of non-trivial products, in particular the class $\eta\in \pi_1 \MSp$ acts non-trivially on many of the classes above and the square of the generator in degree 4 is 4 times a generator in degree 8. By \cite[Thm.~B]{BoK94} the 2-torsion in $\pi_*\MSp$ is unbounded. Note that it follows from \Cref{thm:vanishing} that all of the non-nilpotent classes are in degrees divisible by 4 and that these classes are detected in Novikov's calculation.

Now let $[M]\in \pi_* \MSp$ be the cobordism class of a stably symplectic manifold $M$. We will prove \Cref{thm:vanishing} by showing that all of the stated conditions are equivalent to the following property:
\begin{equation*}
	[M]\in \ker(\pi_* \MSp\xrightarrow{h} H_*(\MSp;\bZ))
\end{equation*}
where $h$ is the integral Hurewicz map (which is a map of graded rings).  Since symplectic bundles are oriented we have a Thom isomorphism of graded rings $H_*(\MSp;\bZ)\cong H_*(BSp;\bZ)$. 

To understand this ring and the behavior of the Hurewicz map, we now recall some well-known results from the theory of characteristic classes: 
\begin{prop}\label{prop:char-classes}\ \\
	\begin{enumerate}
		\item\label{it:integral} $H_*(BSp;\bZ)\cong \bZ[x_{4i}]_{i\geq 1}$, where $|x_{4i}|=4i$.
		\item\label{it:rat-bso} The composite of the forgetful maps \[H_*(BSp;\bQ)\cong \bQ[x_{4i}]_{i\geq 1}\rightarrow H_*(BU;\bQ)\rightarrow H_*(BSO;\bQ)\cong \bQ[y_{4i}]_{i\geq 1}\] is an isomorphism of graded rings. 
		\item\label{it:rat-bu} The forgetful map \[H_*(BSp;\bQ)\rightarrow H_*(BU;\bQ)\cong \bQ[b_{2i}]_{i\geq 1}\] is an injection.
	\end{enumerate}
\end{prop}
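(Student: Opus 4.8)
All three statements are classical, and the plan is to reduce (2) and (3) to a single computation of the forgetful maps on rational cohomology via the splitting principle, and to obtain (1) from the quaternionic analogue of the computation of $H_*(BU)$. For (1) I would first recall that $H^*(BSp;\bZ)\cong\bZ[q_1,q_2,\dots]$ with $|q_i|=4i$, the $q_i$ being the symplectic Pontryagin classes. This is the quaternionic version of the computation of $H^*(BU)$: one has $H^*(\mathbb{HP}^\infty;\bZ)=H^*(BSp(1);\bZ)=\bZ[u]$ with $|u|=4$, and the quaternionic projective bundle theorem together with the splitting principle for symplectic bundles gives $H^*(BSp(n);\bZ)=\bZ[q_1,\dots,q_n]$, whence the claim on passing to the colimit. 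Since this is a torsion-free polynomial algebra of finite type, $H_*(BSp;\bZ)$ is its graded $\bZ$-linear dual, and the Whitney-sum $H$-space structure makes it a connected bicommutative Hopf algebra over $\bZ$ whose total class is grouplike. The very argument that identifies $H_*(BU;\bZ)\cong\bZ[\beta_1,\beta_2,\dots]$ with the $\beta_i$ pushed forward from $H_*(\mathbb{CP}^\infty;\bZ)$ then shows that the images $x_{4i}$ of the standard generators of $H_*(\mathbb{HP}^\infty;\bZ)$ under $\mathbb{HP}^\infty=BSp(1)\to BSp$ are polynomial generators, giving (1).

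For (2) I would pass to a maximal torus $T\subset Sp(n)$, for which $H^*(BT;\bQ)=\bQ[t_1,\dots,t_n]$ with $|t_i|=2$ and Weyl group acting by permutations and sign changes of the $t_i$; thus $H^*(BSp(n);\bQ)=\bQ[t_1^2,\dots,t_n^2]^{\Sigma_n}$ with $q_i=e_i(t_1^2,\dots,t_n^2)$. Restriction of scalars $\mathbb{H}\to\mathbb{C}$ sends the weight $t_i$ to the pair $\{t_i,-t_i\}$, so the underlying unitary bundle has Chern roots $\{\pm t_i\}$ and the forgetful map $H^*(BU;\bQ)\to H^*(BSp;\bQ)$ sends $c_{2i}\mapsto(-1)^iq_i$ and $c_{2i-1}\mapsto 0$. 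Consequently the real Pontryagin class maps through $H^*(BSO;\bQ)\to H^*(BU;\bQ)\to H^*(BSp;\bQ)$ to the elementary symmetric function $e_i$ of the multiset $\{t_1^2,t_1^2,\dots,t_n^2,t_n^2\}$; writing $\prod_j(1+t_j^2)^2=\big(\sum_{k\ge0}q_k\big)^2$ with $q_0=1$ gives $p_i\mapsto\sum_{a+b=i}q_aq_b=2q_i+(\text{decomposables})$. Hence the composite $H^*(BSO;\bQ)\to H^*(BSp;\bQ)$ is triangular with the nonzero scalar $2$ on each indecomposable generator, so it is an isomorphism of polynomial algebras; dualizing yields the asserted isomorphism of homology rings.

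Finally, (3) is immediate from the computation above: the map $H^*(BU;\bQ)\to H^*(BSp;\bQ)$ hits each $q_i$ (up to sign, via $c_{2i}$) and is therefore surjective, so its dual $H_*(BSp;\bQ)\to H_*(BU;\bQ)$ is injective; equivalently, this injection is the first factor of the composite isomorphism of (2). The only genuine computation is the rational one in (2)—identifying the image of $p_i$ on the maximal torus and checking that the coefficient of the indecomposable $q_i$ is nonzero—while (1) requires the mild additional care of promoting the polynomial structure from cohomology to homology over $\bZ$ through the Hopf-algebra duality.
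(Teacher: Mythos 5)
Your proof is correct and follows essentially the same route as the paper's: establish the polynomial structure of the cohomology rings and compute the forgetful maps there, then pass to homology by dualizing the finite-type Hopf algebras. The only difference is one of detail rather than strategy --- the paper cites the cohomological computation for (2) as ``well known and easily proven,'' while you actually carry it out via the maximal torus and the identity $p_i\mapsto 2q_i+(\text{decomposables})$, and for (1) you rerun the $BU$-style argument with $\mathbb{HP}^\infty$ in place of invoking the abstract Hopf-algebra isomorphism with $H_{*/2}(BU;\bZ)$.
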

\begin{proof}
	It is well known that $H^*(BSp;\bZ)$ is a polynomial algebra in the Pontryagin classes $\{P_{4i}\}_{i\geq 1}$ and is abstractly isomorphic as a Hopf algebra to $H^{*/2}(BU;\bZ)$. The identification of the coalgebra structure is equivalent to the standard identity \[P_{4n}(\xi_1\oplus \xi_2)=\sum_{1\leq i\leq n} P_{4i} (\xi_1)\cup P_{4(n-i)}(\xi_2)\] where $\xi_1$ and $\xi_2$ are two symplectic vector bundles. Since the cohomology ring is torsion free and finitely generated in each degree we see that $H_*(BSp;\bZ)$ is abstractly isomorphic to $H_{*/2}(BU;\bZ)$ as Hopf algebras. 

	The second claim follows by duality from the well known, and easily proven, dual claim in cohomology and the third claim follows from the second. 
\end{proof}
\begin{center}
\begin{figure}
\[
	 	\begin{tikzcd}
	 		\pi_*\MSp \rar \dar &  H_*(\MSp;\bZ)  \dar[hook]\rar[hook]  & H_*(\MSp;\bQ)\dar[hook]\\
	 		 \pi_*\MU \rar[hook] \dar & H_*(\MU;\bZ) \dar \rar[hook] & H_*(\MU;\bQ) \dar[hook]\\
	 		 \pi_*\MSO \rar[hook] & H_*(\MSO;\bZ)  \rar & H_*(\MSO;\bQ)
  		\end{tikzcd}
	 	\]
	 	\caption{\label{fig:hurewicz} The Hurewicz homomorphisms with marked injections}
\end{figure}
\end{center}
\begin{proof}[Proof of \Cref{thm:vanishing}]
	First we show that $[M]$ is nilpotent if and only if $[M]$ is in the kernel of the integral Hurewicz map $h$. By \Cref{prop:char-classes}.\eqref{it:integral} we see that $H_*(MSp;\bZ)$ is reduced, i.e., there are no nilpotent elements. So if $[M]$ is nilpotent then $[M]\in \ker h$. For the converse we use the $E_\infty$ structure on $\MSp$ and apply the main result of \cite{MNN} which states that, under this hypothesis, every element in the kernel of $h$ is nilpotent. 

	Since $H_*(\MSp;\bZ)$ is torsion free, we see that if $[M]$ is torsion then $[M]\in \ker h$. If $[M]$ is not torsion then it has non-trivial image in $\pi_* \MSp\otimes \bQ \cong H_*(\MSp;\bQ)$. Since the rational Hurewicz map factors through the integral Hurewicz map (see \Cref{fig:hurewicz}), we see that $[M]$ can not be in $\ker h$. Since $[M]$ is torsion if and only if the second condition from \Cref{thm:vanishing} holds, we see that first two conditions of \Cref{thm:vanishing} are equivalent. 

	Now if we write $H_*(\MSp;\bZ)\cong \bZ[\overline{x}_{4i}]_{i\geq 1}$, then the Hurewicz image of $[M]$ has the following form \[ h[M]=\sum n_\alpha \overline{x}^\alpha. \] Here the sum is over all the monomials $\overline{x}^\alpha$ in the variables $\{\overline{x}_{4i}\}_{i\geq 1}$ whose total degree is the dimension of $M$. By a standard argument (see \cite[pp.~401-402]{Swi02}) one can show that \[n_\alpha=\langle P^\alpha(\nu), \sigma_M\rangle. \] Here $P^\alpha$ is the characteristic class dual to $x^\alpha$, $\nu$ is the stable normal bundle of $M$, and $\sigma_M$ is the fundamental class of $M$. These coefficients are, by definition, the Pontryagin numbers of $M$.  Clearly $h[M]=0$ if and only if these numbers vanish. So the first three conditions are equivalent.

	By naturality of the Hurewicz homomorphisms we see that if $h[M]=0$ then it has trivial image in $H_*(MU;\bZ)$ and $H_*(MSO;\bZ)$. Since $\pi_*MU$ is torsion free, the rational Hurewicz map $\pi_* MU\rightarrow H_*(MU;\bQ)\cong \pi_*MU\otimes \bQ$ is an injection which implies the integral Hurewicz map $\pi_* MU\rightarrow H_*(MU;\bZ)$ is an injection. Similarly $\pi_* MSO\rightarrow H_*(MSO;\bZ)$ is an injection because the composite map \[ \pi_*MSO\rightarrow H_*(MSO;\bZ)\rightarrow H_*(MSO;\bZ/2)\times H_*(MSO;\bQ)\] is an injection \cite[Cor.~1]{Mil60}. It follows that if $h[M]=0$ then $[M]$ has trivial image in the complex and oriented bordism rings (see \Cref{fig:hurewicz}).  Conversely if $[M]$ has trivial image in either of these bordism rings then it has trivial image in $H_*(MSO;\bQ)$. Since $H_*(\MSp;\bZ)$ injects into $H_*(MSO;\bQ)$ we see that $h[M]=0$.

\end{proof}

\begin{remark}
	The injectivity of the integral Hurewicz map for complex bordism is equivalent to the claim that a complex manifold is a boundary if and only if all of its Chern numbers vanish. Similarly an oriented manifold is a boundary if and only if its Stiefel-Whitney and Pontryagin numbers vanish. It follows from \Cref{thm:vanishing} that for \emph{stably symplectic} manifolds the vanishing of the Pontryagin numbers implies the vanishing of all of the Chern and Stiefel-Whitney numbers. Of course this can be independently verified via elementary arguments with characteristic classes.
\end{remark}

\begin{remark}\label{rem:nige}
We now show how the equivalence of conditions \ref{it:nil} and \ref{it:tor} in \Cref{thm:vanishing} can easily be deduced from the nilpotence theorem of \cite{DHS88}. The nilpotence theorem implies that that the kernel of the $\MU_*$-Hurewicz homomorphism \[ h_{MU}\colon \pi_* \MSp\rightarrow \MU_*\MSp\] is nilpotent. Since we already known that the torsion free summand of $\pi_*\MSp$ is non-nilpotent we just need to show that this kernel contains all of the torsion in $\pi_* \MSp$.  This will follow from the fact that $\MU_*\MSp$ is torsion free. Indeed the Atiyah-Hirzebruch spectral sequence \[ H_*(\MSp;\MU_*)\cong H_*(\MSp;\bZ)\otimes \MU_*\Longrightarrow \MU_*\MSp\] collapses for degree reasons. This is a free $\MU_*$-algebra and hence there are no extension problems.
\end{remark}
	\bibliographystyle{amsalphaabbrv}


	 \bibliography{symplectic.bbl} 

\newcommand{\etalchar}[1]{$^{#1}$}
\def\cftil#1{\ifmmode\setbox7\hbox{$\accent"5E#1$}\else
  \setbox7\hbox{\accent"5E#1}\penalty 10000\relax\fi\raise 1\ht7
  \hbox{\lower1.15ex\hbox to 1\wd7{\hss\accent"7E\hss}}\penalty 10000
  \hskip-1\wd7\penalty 10000\box7}
\providecommand{\bysame}{\leavevmode\hbox to3em{\hrulefill}\thinspace}
\providecommand{\MR}{\relax\ifhmode\unskip\space\fi MR }
\providecommand{\MRhref}[2]{%
  \href{http://www.ams.org/mathscinet-getitem?mr=#1}{#2}
}
\providecommand{\href}[2]{#2}
\begin{thebibliography}{ABG{\etalchar{+}}08}

\bibitem[ABG{\etalchar{+}}08]{ABGHR08}
M.~Ando, A.~J. Blumberg, D.~Gepner, M.~Hopkins, and C.~Rezk, \emph{Units of
  ring spectra and {T}hom spectra}, arXiv:0810/4535.

\bibitem[BK94]{BoK94}
B.~I. Botvinnik and S.~O. Kochman, \emph{Singularities and higher torsion in
  symplectic cobordism}, Canad. J. Math. \textbf{46} (1994), no.~3, 485--516.
  \MR{1276108 (95d:55005)}

\bibitem[DHS88]{DHS88}
E.~S. Devinatz, M.~J. Hopkins, and J.~H. Smith, \emph{Nilpotence and stable
  homotopy theory. {I}}, Ann. of Math. (2) \textbf{128} (1988), no.~2,
  207--241. \MR{MR960945 (89m:55009)}

\bibitem[GR92]{GoR92}
V.~Gorbunov and N.~Ray, \emph{Orientations of {S}pin bundles and symplectic
  cobordism}, Publ. Res. Inst. Math. Sci. \textbf{28} (1992), no.~1, 39--55.
  \MR{1147850 (93e:55008)}

\bibitem[Koc93]{Koc93}
S.~O. Kochman, \emph{Symplectic cobordism and the computation of stable stems},
  Mem. Amer. Math. Soc. \textbf{104} (1993), no.~496, x+88. \MR{1147349
  (94a:55004)}

\bibitem[Liu64]{Liu64}
A.~Liulevicius, \emph{Notes on homotopy of {T}hom spectra}, Amer. J. Math.
  \textbf{86} (1964), 1--16. \MR{0166787 (29 \#4060)}

\bibitem[LMS86]{LMS86}
L.~G. Lewis, J.~P. May, and M.~Steinberger, \emph{Equivariant stable homotopy
  theory}, Lecture Notes in Mathematics, vol. 1213, Springer-Verlag, 1986.

\bibitem[Mil60]{Mil60}
J.~Milnor, \emph{On the cobordism ring {$\Omega ^{\ast} $} and a complex
  analogue. {I}}, Amer. J. Math. \textbf{82} (1960), 505--521. \MR{0119209 (22
  \#9975)}

\bibitem[MNN14]{MNN}
A.~Mathew, J.~Noel, and N.~Naumann, \emph{On a nilpotence conjecture of {J.P.
  M}ay}, Available at \url{http://arxiv.org/pdf/1403.2023v2.pdf}.

\bibitem[Nov62]{Nov62}
S.~P. Novikov, \emph{Homotopy properties of {T}hom complexes}, Mat. Sb. (N.S.)
  \textbf{57 (99)} (1962), 407--442. \MR{0157381 (28 \#615)}

\bibitem[Ray72]{Ray71}
N.~Ray, \emph{The symplectic bordism ring}, Proc. Cambridge Philos. Soc.
  \textbf{71} (1972), 271--282. \MR{0290384 (44 \#7567b)}

\bibitem[Swi02]{Swi02}
R.~M. Switzer, \emph{Algebraic topology---homotopy and homology}, Classics in
  Mathematics, Springer-Verlag, Berlin, 2002, Reprint of the 1975 original
  [Springer, New York; MR0385836 (52 \#6695)]. \MR{MR1886843}

\end{thebibliography}
\end{document}